\newtheorem{theorem}{Theorem}[section]
\newtheorem{proposition}[theorem]{Proposition}
\newtheorem{corollary}[theorem]{Corollary}
\newtheorem{lemma}[theorem]{Lemma}
\newtheorem{example}[theorem]{Example}
\newtheorem{remark}[theorem]{Remark}
\newtheorem{question}[theorem]{Question}
\newcommand{\downarrowright}[1]{\downarrow
\rlap{\raise0.1cm\hbox{$\scriptstyle{#1}$}}}
\newcommand{\downarrowleft}[1]{\rlap{\kern-0.2cm
\raise0.1cm\hbox{$\scriptstyle{#1}$}}\downarrow}
\newcommand{\uparrowright}[1]{\uparrow
\rlap{\lower0.1cm\hbox{$\scriptstyle{#1}$}}}
\newcommand{\uparrowleft}[1]{\rlap{\kern-0.2cm
\lower0.1cm\hbox{$\scriptstyle{#1}$}}\uparrow}
\begin{document}
\setcounter{page}{1}
\title{Cellular covers of local groups}
\author[R. Flores]{Ram\'on Flores}
\author[J. Scherer]{J\'er\^{o}me Scherer}


\thanks{This work was supported by the Spanish Ministry of Education and Science MEC-FEDER grant MTM2010-15831 and
MTM2016-80439-P, and the Andalusian government under grants FQM-213 and P07-FQM-2863}

\subjclass[2000]{Primary 20F99; Secondary 55P60, 20E22, 20F50, 55R35}

\maketitle

\begin{abstract}
We prove that, in the category of groups, the composition of a cellularization and a localization functor need not be idempotent.
This provides a negative answer to a question of Emmanuel Dror Farjoun.
\end{abstract}

\section*{Introduction}
Cellularization and localization functors are idempotent functors that are respectively augmented and coaugmented. The
interest for a systematic study of such functors comes from Homotopy Theory, mainly through early work of Bousfield, \cite{MR57:17648},
and Farjoun's more recent book \cite{Farjoun95}.
The names themselves are designed so as to remind us of cell complexes and $p$-local spaces or modules.
In Group Theory, localization functors appear explicitly and in full generality in Casacuberta's \cite[Section~3]{MR1320986}, but many
examples, such as abelianization or localization at a set of primes are classical. Likewise, cellularization functors for groups appear relatively late
and are described for the first time in \cite{MR2003a:55024}, even though specific cellular constructions such as universal central extensions have
played an important role in Group Theory.

It is helpful to think about localization and cellularization as functors that act on the whole category of groups, transforming some groups
drastically, possibly killing many of them, and allowing us to focus on some special features such as torsion or divisibility. In \cite{MR2995665} the authors
studied the effect of iterating different cellularization functors on a given (finite) group, that is, looking at the ``orbit'' of a group under the action
of all possible cellularization functors. A similar approach in the coaugmented case is probably quite difficult, as shown by the work of Rodr\'{i}guez and Scevenels, \cite{MR1804680}.

 The aim of this note is to analyze the effect of iterating a cellularization and a localization in the category of groups. Once a group $G$ has
been transformed by a localization functor $G \rightarrow LG$, and after it has been modified by a cellularization functor $cell LG \rightarrow LG$,
it seems that the group $cell LG$ is frequently left unchanged by further application of this procedure. In this case $cell LG$ is a fixed point in the
category of groups for the composition $cell  \circ L$. As we recall
below in Section~\ref{sec:idem}, all known examples provide such fixed points and it is in fact a question
Farjoun asked in \cite[Question~3]{MR2454697} whether $cell \circ L$ and $L \circ cell$ are idempotent functors,
without specifying the underlying category.

The first author gave a negative answer to this question in the case of homotopy localization and cellularization functors of spaces, \cite{MR2891699}.
His counterexample cannot be adapted in the category of groups as it uses in a central way the flexibility of having homotopy groups in different dimensions.
We have been wondering since then how to attack this problem for groups, one major obstruction being the difficulty to perform explicit computations.
Our theorem is based on a very recent computation, \cite{FloresMuro}, of a certain cellularization of large Burnside groups. The specific form of this
cellularization is the key to the unexpected behaviour of the iteration of the two functors we choose. It is rooted in the work of many mathematicians
who provided negative answers to the Burnside problem, in particular Ol'shanski\u\i's intriguing computation of the Schur multiplier
of Burnside groups at large primes, \cite[Corollary~31.2]{MR1191619}, see also Adian and Atabekyan's improved bound in \cite{Central}.

We conclude this introduction by recalling that any group theoretical example involving cellularization and localization can be upgraded
to a homotopical example via the classifying space functor. Thus, our counterexample provides a new counterexample to Farjoun's question
in the category of spaces, simpler than the original proof in \cite{MR2891699} in that all spaces are $K(\pi, 1)$'s, but more subtle in that
it relies on the restricted Burnside problem.

\setcounter{equation}{0}

\textbf{Acknowledgments.} We thank Antonio Viruel, Delaram Kahrobaei and Simon Smith for helpful conversations. We also thank
Varujan Atabekyan and Alexander Yu. Olshanskii for helping us out with the second homology group of Burnside groups.
The first author wishes to thank the \'{E}cole Polytechnique F\'{e}d\'{e}rale de Lausanne for its kind hospitality when this joint project started.

\section{Background on localization and cellularization}
\label{sec:back}
We will work mostly in the category of groups and will thus simply refer to localization and cellularization functors without specifying
that they are functors of groups. If necessary, we will make clear when we deal with space valued functors by calling them homotopical localization
and homotopical cellularization functors.

A \emph{localization} functor $L$ is an idempotent and coaugmented functor. The coaugmentation is traditionally written $\eta_G\colon G \rightarrow LG$
and idempotency means that both $\eta_{LG}$ and $L \eta_G$ are isomorphisms for any group $G$. Typical localization functors are obtained by
``inverting'' a group homomorphism $\varphi\colon A \rightarrow B$. A group $G$ is $\varphi$-\emph{local} if $\hbox{\rm Hom}(\varphi, G)$ is an isomorphism and
a group homomorphism $\psi$ is a $\varphi$-\emph{local equivalence} if $\hbox{\rm Hom}(\psi, G)$ is an isomorphism for any $\varphi$-local group~$G$.
Localization with respect to $\varphi$ is the localization functor $L_\varphi$ which is characterized by the fact that the coaugmentation
$\eta_G\colon G \rightarrow L_\varphi G$ is a $\varphi$-local equivalence to a $\varphi$-local group.

\begin{example}
\label{ex:loc}
{\rm Our main player in the world of localization functors is reduction mod $p$, i.e. localization with respect to the epimorphism
$\varphi\colon \mathbb Z \rightarrow \mathbb Z/p$, which, with the multiplicative notation, is the morphism $\varphi\colon F_1 \rightarrow C_p$
 from the free group on one generator to the cyclic group of order $p$, sending the  generator to a chosen generator.
We will write $L_p$ for this functor from now on. Loosely speaking, the effect of $L_p$ on a group $G$ is to kill $q$-torsion for $q \neq p$ and to
convert elements of order $p^n$ or of infinite order into order $p$ elements. In other words $L_p$ is left adjoint to the inclusion
in the category of groups of the variety of groups of exponent~$p$.
}
\end{example}

A \emph{cellularization} functor $cell$ is an idempotent and augmented functor. The augmentation is traditionally written $\epsilon_G\colon cell G \rightarrow G$.
The only known cellularization functors arise as follows. Choose  a group $A$ and define a group homomorphism $\psi$ to be an $A$-\emph{cellular
equivalence} if $\hbox{\rm Hom}(A, \psi)$ is an isomorphism. A group $G$ is $A$-\emph{cellular} if $\hbox{\rm Hom}(G, \psi)$ is an isomorphism for any
$A$-cellular equivalence $\psi$. Cellular groups are characterized by the property that they belong to $\mathcal C(A)$,
the smallest class containing $A$ and closed under
isomorphisms and arbitrary colimits, \cite[Section~3]{MR2003a:55024}.

\begin{example}
\label{ex:cell}
{\rm Our object of interest in the world of cellularization functors is $C_p$-cellularization, which we will write $cell_p$ for short.
Loosely speaking $cell_p G \rightarrow G$ is the best approximation of $G$ that can be constructed out of cyclic groups of order~$p$.
}
\end{example}

We mention next the construction by Casacuberta and Descheemaeker of relative group completions, \cite[Theorem 3.2]{MR2125447}.
We will call it \emph{fiberwise group localization}, which uses the name of the homotopical analogue, \cite[Theorem~1.F.1]{Farjoun95}.

\begin{theorem}[Casacuberta--Descheemaeker]
\label{fgl}
Let $A \rightarrowtail B\twoheadrightarrow C$ be an extension of groups, and let $L$ be a localization functor.
Then there exists a commutative diagram
\[
\xymatrix{
A \ar@{>->}[r] \ar[d]^{f_1} & B \ar@{->>}[r] \ar[d]^{f_2} & C \ar@{=}[d] \\
LA \ar@{>->}[r] & E \ar@{->>}[r] & C}
\]
such that $f_1$ is $L$-localization and $f_2$ is an $L$-local equivalence.
\end{theorem}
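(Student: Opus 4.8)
The plan is to imitate, in the category of groups, the homotopy-theoretic construction of fiberwise localization: one localizes the ``fiber'' $A$ while keeping the ``base'' $C$ rigid, and assembles the new total group $E$ as a quotient of a semidirect product. First I would exploit the conjugation action of $B$ on its normal subgroup $A$, a homomorphism $B \to \Aut(A)$, and push it through the functor $L$ to obtain an action $B \to \Aut(LA)$, $b \mapsto L(c_b)$, where $c_b$ denotes conjugation by $b$. The coaugmentation $\eta_A \colon A \to LA$ is then equivariant by naturality. The key technical point at this stage is that for $a \in A$ the induced automorphism $L(c_a)$ of $LA$ is nothing but conjugation by $\eta_A(a)$; this is where idempotency enters, since both $L(c_a)$ and conjugation by $\eta_A(a)$ are endomorphisms of the local group $LA$ agreeing after precomposition with the localization $\eta_A$, hence equal by the universal property.

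With the action in hand I would form the semidirect product $LA \rtimes B$ and the ``antidiagonal'' map $\delta \colon A \to LA \rtimes B$, $\delta(a) = (\eta_A(a)^{-1}, a)$. A direct check using the equivariance of $\eta_A$ shows $\delta$ is an injective homomorphism, and the identification $L(c_a) = c_{\eta_A(a)}$ above is exactly what makes its image normal. I would then set $E = (LA \rtimes B)/\delta(A)$ and define $f_2 \colon B \to E$ by $b \mapsto [(1,b)]$. Projecting $LA \rtimes B \to C$ via $(x,b) \mapsto \pi(b)$, where $\pi\colon B \twoheadrightarrow C$, kills $\delta(A)$ and yields the map $E \twoheadrightarrow C$; its kernel $(LA \rtimes A)/\delta(A)$ is identified with $LA$ through the homomorphism $(x,a) \mapsto x\,\eta_A(a)$, whose kernel is precisely $\delta(A)$. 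Tracing $f_2$ on $A$ through this identification shows $f_2$ restricts to $\eta_A$, so $f_1 = \eta_A$ is localization and the square commutes.

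It remains to prove that $f_2$ is an $L$-local equivalence, that is, that $\Hom(f_2, G)$ is a bijection for every local group $G$. Given $\phi \colon B \to G$, its restriction to $A$ factors uniquely as $\bar\phi \circ \eta_A$ since $G$ is local, and I would extend $\phi$ to $\tilde\phi \colon LA \rtimes B \to G$ by $(x,b) \mapsto \bar\phi(x)\phi(b)$. Checking that $\tilde\phi$ is a homomorphism requires the equivariance relation $\bar\phi \circ L(c_b) = c_{\phi(b)} \circ \bar\phi$, again verified on the image of $\eta_A$ and then promoted to all of $LA$ by the universal property; that $\tilde\phi$ annihilates $\delta(A)$ is immediate, so $\tilde\phi$ descends to the desired lift $E \to G$. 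Uniqueness follows because $E$ is generated by $f_2(B)$ together with the copy of $LA$, on which any two lifts agree after precomposition with $\eta_A$. I expect the main obstacle to be organizing the two places where idempotency is invoked, namely the normality of $\delta(A)$ and the equivariance needed to extend maps, both of which reduce to the identity $L(c_a) = c_{\eta_A(a)}$ and to the uniqueness clause in the universal property of $\eta_A$.
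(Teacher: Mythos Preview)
The paper does not prove this theorem; it is quoted as \cite[Theorem~3.2]{MR2125447} and used as a black box, so there is no proof in the paper to compare against. Your construction is correct and is in fact essentially the one given by Casacuberta and Descheemaeker: form the semidirect product $LA \rtimes B$ via the functorially induced action $b \mapsto L(c_b)$, kill the antidiagonal copy of $A$, and verify the universal property by hand. The two checkpoints you flag, namely the identity $L(c_a) = c_{\eta_A(a)}$ (needed for normality of $\delta(A)$) and the equivariance $\bar\phi \circ L(c_b) = c_{\phi(b)} \circ \bar\phi$ (needed for extending maps), are exactly the crucial ones, and your reduction of both to the uniqueness clause of the universal property of $\eta_A$ is the right way to handle them.
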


The methods we use in this article are mostly of a group theoretical nature. There are however situations where homotopy
theory provides helpful tools. The first one is about so-called \emph{nullification}
functors, i.e. localization with respect to a map of the form $A \rightarrow *$. The standard notation for such a functor is~$P_A$.
In group theory, the morphism $G \rightarrow P_A G$ is an epimorphism to the largest quotient of $G$ such that $\hbox{\rm Hom}(A, G)$
is trivial, \cite[Theorem~3.2]{MR1320986}. In homotopy theory $A$ is a space and $X \rightarrow P_A X$ is obtained from $X$,
up to homotopy, by taking the homotopy cofiber  $X_1$ of the map $\bigvee \Sigma^k A \rightarrow X$, where the wedge is taken
over representatives of all homotopy classes of maps out of suspensions of $A$, for $k \geq 0$, and by repeating this procedure.
The nullification $P_A X$ is the homotopy colimit of the telescope $X=X_0 \rightarrow X_1 \rightarrow X_2 \dots$, see for example
\cite[Section~17]{MR97i:55023}.
We will use the relationship between the group theoretical nullification with respect to $C_p$
and the homotopical nulllification with respect to the Moore space $M(C_p, 1) = S^1 \cup_p e^2$, as well as the analogous relation
between group theoretical and homotopical cellularization.

\begin{proposition}
\label{nullification}
Let $G$ be any group. Then
\begin{enumerate}
\item $\pi_1 P_{M(C_p, 1)} K(G, 1) \cong P_{C_p} G$;
\item $\pi_1 cell_{M(C_p, 1)} K(G, 1) \cong cell_{p} G$.
\end{enumerate}
\end{proposition}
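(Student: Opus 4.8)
The plan is to reduce both statements to a single computation of the pointed mapping space out of the Moore space, and then to transport the universal properties of the homotopical functors to the group level through $\pi_1$. First I would record the key computational lemma: for an arbitrary group $H$, the cofibre sequence $S^1 \xrightarrow{p} S^1 \to M(C_p,1)$ yields, after applying $\map(-, K(H,1))$, a fibration sequence whose base and total space are both $\Omega K(H,1) \simeq H$ (a homotopically discrete space, as $K(H,1)$ is aspherical), with connecting map the $p$-th power map $h \mapsto h^p$. Hence $\map(M(C_p,1), K(H,1))$ is homotopy equivalent to the discrete set $\{h \in H : h^p = 1\} \cong \Hom(C_p, H)$. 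In particular $K(H,1)$ is $M(C_p,1)$-null precisely when $\Hom(C_p,H)$ is trivial, i.e. when $H$ is $C_p$-null, and this same computation identifies the target of the cellular equivalence in part (2).

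For (1), write $N = P_{M(C_p,1)} K(G,1)$ and $\bar G = \pi_1 N$; the point to keep in mind is that $N$ need not be aspherical, so I cannot simply identify it with $K(P_{C_p}G,1)$. I would argue in three steps. (i) Since $N$ is $M(C_p,1)$-null, $\map(M(C_p,1), N)$ is contractible, so every element $x \in \bar G$ with $x^p=1$ — which can be realised by a map $M(C_p,1) \to N$ sending the generator to $x$ — must be trivial; thus $\Hom(C_p, \bar G)$ vanishes and $\bar G$ is $C_p$-null. (ii) From the explicit telescope construction of $P_{M(C_p,1)}$ recalled in Section~\ref{sec:back}, coning off the simply connected suspensions $\Sigma^k M(C_p,1)$ ($k \geq 1$) leaves $\pi_1$ unchanged, while coning off $M(C_p,1)$ itself only passes to a quotient; by van Kampen and the fact that $\pi_1$ preserves the telescope colimit, the coaugmentation induces a surjection $G \twoheadrightarrow \bar G$. (iii) By the universal property of the group nullification, the surjection $G \to \bar G$ onto the $C_p$-null group $\bar G$ factors as $G \xrightarrow{\eta} P_{C_p}G \xrightarrow{\rho} \bar G$ with $\rho$ surjective; applying the universal property of the homotopical nullification to the $M(C_p,1)$-null space $K(P_{C_p}G,1)$ produces a map $N \to K(P_{C_p}G,1)$ whose effect on $\pi_1$ is a retraction of $\rho$. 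A surjective split monomorphism is an isomorphism, so $\bar G \cong P_{C_p}G$.

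For (2), set $W = cell_{M(C_p,1)} K(G,1)$ and $\tilde G = \pi_1 W$. First I would show $\tilde G$ is $C_p$-cellular: the class of spaces $Y$ with $\pi_1 Y \in \C(C_p)$ contains $M(C_p,1)$, is closed under weak equivalences and under pointed homotopy colimits (because $\pi_1$ carries such colimits to colimits of groups and $\C(C_p)$ is colimit-closed), hence contains all of $\C(M(C_p,1)) \ni W$, so $\tilde G \in \C(C_p)$. Next I would show the augmentation induces a $C_p$-cellular equivalence $\pi_1 e \colon \tilde G \to G$. By definition of cellularization and the lemma, $\map(M(C_p,1), W) \simeq \map(M(C_p,1), K(G,1)) \simeq \Hom(C_p, G)$, so on $\pi_0$ we get a bijection $[M(C_p,1), W]_* \cong \Hom(C_p, G)$. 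This fits in a commuting triangle with the \emph{surjection} $[M(C_p,1), W]_* \twoheadrightarrow \{x \in \tilde G : x^p = 1\}$ (every such $x$ is realised by a map from $M(C_p,1)$) and the map $\{x\in\tilde G:x^p=1\} \to \{g \in G: g^p=1\}$ induced by $\pi_1 e$; since the composite is the above bijection, the surjection is forced to be injective and the induced map a bijection, that is $\Hom(C_p,\tilde G) \xrightarrow{\cong} \Hom(C_p,G)$. Finally, $\tilde G$ being $C_p$-cellular and $\pi_1 e$ being a $C_p$-cellular equivalence, the universal property of $cell_{C_p}G \to G$ yields mutually inverse comparison maps and hence $\tilde G \cong cell_p G$.

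The main obstacle in (1) is precisely that the nullification is not aspherical, which is why step (ii) — the surjectivity of the coaugmentation on $\pi_1$, read off from the cellular construction — is indispensable: without it one only obtains a split monomorphism $P_{C_p}G \hookrightarrow \bar G$. The main obstacle in (2) is disentangling the contribution of $\pi_2 W$ from $[M(C_p,1), W]_*$, since in general this pointed set sees both the $p$-torsion of $\pi_1 W$ and a cokernel of multiplication by $p$ on $\pi_2 W$; the asphericity of the target $K(G,1)$, where $\pi_2$ vanishes, is exactly what collapses this ambiguity and lets the sandwiching argument force a bijection on $\Hom(C_p,-)$. A secondary technical point I would need to justify carefully is that $\pi_1$ commutes with the pointed homotopy colimits appearing in the cellular construction, which I would treat via the adjunction between $\pi_1$ and $K(-,1)$.
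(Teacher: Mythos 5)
Your proof is correct, but it does not follow the paper's route, because the paper gives no argument at all for this proposition: it is proved by citation, part (1) being \cite[Theorem~3.5]{MR1320986} and part (2) being \cite[Theorem~3.3]{MR2003a:55024}. What you have done is reconstruct, in a self-contained way, the content of those two references. Your key lemma --- that the cofibre sequence $S^1 \stackrel{p}{\to} S^1 \to M(C_p,1)$ exhibits $\map(M(C_p,1),K(H,1))$ as the homotopy discrete space $\Hom(C_p,H)$ --- is exactly the bridge between the group-theoretic and homotopical notions that both references rely on, and your subsequent steps are sound: for (1), nullity of $N$ kills $p$-torsion in $\pi_1 N$, the telescope construction gives surjectivity of $G \to \pi_1 N$, and the two universal properties produce the mutually inverse comparisons (your observation that a surjective split monomorphism is an isomorphism closes the argument); for (2), the closure of $\{Y \hbox{ connected} : \pi_1 Y \in \mathcal{C}(C_p)\}$ under pointed homotopy colimits puts $\pi_1 W$ in $\mathcal{C}(C_p)$, and your sandwich argument correctly disposes of the potential $\pi_2 W$ contribution to $[M(C_p,1),W]_*$, forcing $\Hom(C_p,\pi_1 W) \cong \Hom(C_p,G)$. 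The two prerequisites you flag are genuinely the only outside inputs: that $\pi_1$ takes pointed homotopy colimits of \emph{connected} spaces to colimits of groups (this is where you should insist on connectivity when defining your auxiliary class; it holds for everything in $\mathcal{C}(M(C_p,1))$), and that the homotopical cellularization of $K(G,1)$ lies in $\mathcal{C}(M(C_p,1))$. In terms of trade-offs, the paper's citation buys brevity and defers these verifications to the literature, while your treatment makes the proposition independent of the references at the cost of roughly a page of standard, but fiddly, checking; your toolkit (Chach\'olski's telescope, the Moore-space mapping space computation) is moreover the same one the paper deploys later in the proofs of Proposition~\ref{cellB} and Example~\ref{ex:PCW3}, so your argument fits the paper's style.
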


The first isomorphism is \cite[Theorem~3.5]{MR1320986} and the second one is \cite[Theorem~3.3]{MR2003a:55024}.
In order to identify the space $cell_{M(C_p, 1)} K(G, 1)$ we will
use Chach\'olski's fibration from \cite[Theorem~20.3]{MR97i:55023}. We start with the cofibration sequence
\[
\bigvee M(C_p, 1) \rightarrow K(B, 1) \rightarrow Cof
\]
where the wedge is taken over representatives of all homotopy classes from $M(C_p, 1)$ to $K(B, 1)$. The Moore
space $M(C_p, 2)$ is the suspension of $M(C_p, 1)$

\begin{theorem}[Chach\'olski]
\label{wojtek}
The cellularization $cell_{M(C_p, 1)} K(G, 1)$
is weakly equivalent to the homotopy fiber of the composite map $K(G, 1) \rightarrow Cof \rightarrow P_{M(C_p, 2)} Cof$.
\end{theorem}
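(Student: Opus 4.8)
The plan is to check that the homotopy fiber $F$ of the composite $K(G,1)\to Cof\to P_{M(C_p,2)}Cof$ enjoys the two properties that characterize the $M(C_p,1)$-cellularization: that $F$ belongs to $\C(M(C_p,1))$, and that the natural map $F\to K(G,1)$ is an $M(C_p,1)$-cellular equivalence. Since $cell_{M(C_p,1)}K(G,1)$ is, up to weak equivalence, the unique $M(C_p,1)$-cellular space admitting a cellular equivalence to $K(G,1)$, verifying these two properties identifies $F$ with it. To lighten notation I write $A=M(C_p,1)$, so that $\Sigma A=M(C_p,2)$, and I let $e\colon\bigvee A\to K(G,1)$ be the evaluation map over all homotopy classes, whose cofiber is $Cof$.

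I would dispatch the cellular equivalence first, as it is the softer of the two. Applying $\map(A,-)$ to the fibration sequence $F\to K(G,1)\to P_{\Sigma A}Cof$ yields a fibration sequence of pointed mapping spaces. Because $P_{\Sigma A}Cof$ is $\Sigma A$-null and $\map(\Sigma A,Z)\simeq\Omega\,\map(A,Z)$, the space $\map(A,P_{\Sigma A}Cof)$ has trivial loops, hence is homotopically discrete. Furthermore every map $A\to K(G,1)$ is a wedge summand of $e$, so its composite with $K(G,1)\to Cof$ is null by construction of the cofiber, and a fortiori its composite with $K(G,1)\to P_{\Sigma A}Cof$ is null. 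Thus all of $\map(A,K(G,1))$ lands in the contractible basepoint component of $\map(A,P_{\Sigma A}Cof)$, and the fibration sequence degenerates to the weak equivalence $\map(A,F)\simeq\map(A,K(G,1))$ expressing that $F\to K(G,1)$ is an $A$-cellular equivalence.

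For the cellularity of $F$ I would analyze the fibration sequence $W\to F\to N$ coming from the composite, where $W$ is the homotopy fiber of $K(G,1)\to Cof$ and $N$ is the homotopy fiber of $Cof\to P_{\Sigma A}Cof$. The fiber $W$ sits over the cofiber of a wedge of copies of $A$; a Ganea-type analysis of this cofiber exhibits $W$ as assembled from $\bigvee A$ by homotopy colimits, so $W\in\C(A)$. The target $P_{\Sigma A}Cof$ is produced from $Cof$ by the telescope construction that successively cones off maps out of the suspensions $\Sigma^k A$, $k\geq1$; the homotopy fiber $N$ of its coaugmentation is accordingly $\Sigma A$-cellular, and since $\Sigma A$ itself lies in $\C(A)$ we have $\C(\Sigma A)\subseteq\C(A)$, whence $N\in\C(A)$ as well.

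The main obstacle is the last inference, from $W,N\in\C(A)$ to $F\in\C(A)$: the class $\C(A)$ is closed under homotopy colimits, but $F$ is assembled from $W$ and $N$ by a homotopy limit, and cellularity does not propagate across fibrations for formal reasons. This is exactly the point at which the generalized Blakers--Massey theorem is indispensable. I would use it to compare the total space $F$ with a homotopy colimit of its cellular strata --- organized along the telescope defining $P_{\Sigma A}Cof$ --- and to bound the connectivity of the comparison precisely enough that all correction terms remain built out of $A$. Securing these connectivity estimates is where I expect the real difficulty to concentrate, all the more so because $A=M(C_p,1)$ is not simply connected, so that the range in which Blakers--Massey applies must be tracked with care.
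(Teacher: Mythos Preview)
The paper does not prove this statement at all: it is stated as Chach\'olski's theorem, with a bare citation to \cite[Theorem~20.3]{MR97i:55023}, and is then used as a black box in the proof of Proposition~\ref{cellB}. There is thus no in-paper argument for you to be compared against.

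As for your sketch itself: the cellular-equivalence half is correct and cleanly argued. The cellularity half, however, has a gap in addition to the one you already flag. You assert that the fiber $N$ of $Cof\to P_{\Sigma A}Cof$ is $\Sigma A$-cellular ``accordingly'', from the telescope description of nullification. What that description gives for free is only that $N$ is $\Sigma A$-\emph{acyclic}, i.e.\ $P_{\Sigma A}N\simeq *$; promoting acyclicity to cellularity is not formal and is in fact one of the substantive points in Chach\'olski's theory of closed classes. (Your claim about $W$ via a Ganea-type argument is on firmer ground, though it too is a lemma in \cite{MR97i:55023} rather than a triviality.) Your honest identification of the fibration step $W\to F\to N$ as the crux is accurate, but invoking generalized Blakers--Massey in the non-simply-connected setting, with only a gesture toward ``tracking connectivity with care'', does not close the argument. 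Chach\'olski's actual proof does not run via Blakers--Massey estimates on this fibration; it proceeds through his calculus relating the classes $\C(A)$ and $\overline{\C(A)}$ and their behaviour under (co)fibrations. In summary, you have isolated the right characterizing properties and verified one of them, but the second is essentially the content of the theorem and your outline does not establish it.
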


\section{Idempotent examples}
\label{sec:idem}
This section is devoted to describe examples where the composition of a cellularization and a localization functor is an idempotent functor. Note that the composite will not be, in general, augmented nor coaugmented.

\begin{example}
\label{ex:PCW}
{\rm Let $p$ be a prime number. Consider the cellularization functor $cell_p$ introduced above, and $P_p$ is nullification with respect to $C_p$,
i.e. localization with respect to the constant map $C_p\rightarrow \{1\}$. The effect of $cell_p$ is described in \cite[Theorem~3.7]{MR2003a:55024},
while, for any group $G$, $P_p G$ is isomorphic to the quotient of $G$ by its $p$-radical $T_p(G)$, which is the largest $p$-torsion free quotient of~$G$.

The cellularization $cell_p G$ belongs to the class of $C_p$-cellular groups $\mathcal C(C_p)$, which is contained in the class $\overline{\mathcal C(C_p)}$
of $C_p$-acyclic groups, \cite[Section~4]{MR2003a:55024} (this class consists by definition in all groups $G$ that are ``killed'' by $C_p$, i.e. $P_p G = \{ 1 \}$).
In particular $P_p cell_p G = \{1 \}$.
Likewise, as there are no non-trivial homomorphisms from $C_p$ to $P_p G$, the group $cell_p P_p G$ is also trivial. So in this case both functors
$P_p cell_p$ and $cell_p P_p$ are idempotent.
}
\end{example}

\begin{example}
\label{ex:PCW2}
{\rm Consider now two distinct primes $p$ and $q$, and assume furthermore that $G$ is finite. In this case $cell_p G$ has been described in
\cite[Lemma~4.5]{MR2272149} as an extension
\[
K\rightarrow cell_pG \rightarrow S_pG,
\]
where $S_pG$ is the subgroup of $G$ generated
by its order $p$ elements, and $K$ is the quotient of $H_2(S_pG; \mathbb Z)$ by its  $p$-torsion subgroup. We compute $cell_p P_qG$. As $P_q G=G/T_qG$,
$q$ does not divide the order of $P_q G$, and the same happens to $cell_p P_qG$ as can be seen from the previous extension.
Thus, there is an isomorphism $cell_p P_qG \cong P_q cell_p P_qG$, so that the latter is a $C_p$-cellular group.
This implies that $cell_p P_q$ is idempotent when applied to $G$.

This is an instance of a very frequent phenomenon,
in which some cellular deformation of a local group turns out to be local again, or, as illustrated in our next example,
some localization of a cellular group is cellular. Both cases imply idempotency in the composition.
}
\end{example}

\begin{example}
\label{ex:PCW3}
{\rm We wish here to understand $P_q cell_p$ for two distinct primes $p$ and~$q$. Let $G$ be a finite group.
We will prove that $P_q cell_p G$ is cellular.

From the extension in Example~\ref{ex:PCW2}, as $G$ is finite, it follows that we only need to check that $P_q cell_p G$ is generated by order $p$ elements,
and that $H_2 (P_q cell_p G; \mathbb Z)$ is $p$-torsion. The former is clear, as $P_q cell_p G$ is a quotient of $cell_p G$, a finite
group generated by elements of order~$p$.

To prove the latter, observe that $P_q cell_p G$ is $q$-torsion free, hence so is the second homology group $H_2(P_q cell_p G; \mathbb Z)$.
We use now Proposition~\ref{nullification}~(1) to identify $P_q cell_p G$ with the fundamental group of $P_{M(C_q, 1)} K(cell_p G, 1)$.
This space is obtained by killing successively all maps out of the Moore space $M(C_q, 1)$ and its suspensions. The homology groups of the
homotopy cofibers $X_0, X_1, X_2$, etc., described above fit thus in an exact sequence
\[
H_2(X_k; \mathbb Z) \rightarrow H_2(X_{k+1}; \mathbb Z) \rightarrow H_1(\bigvee \Sigma^k M(C_q, 1); \mathbb Z)\, .
\]
Since $H_2(X_0; \mathbb Z)$ is $p$-torsion -- because $X_0 = K(cell_p G, 1)$ -- and the homology of the Moore space $M(C_q, 1)$ is $q$-torsion,
this proves by induction that the second homology group $H_2 (P_{M(C_q, 1)} K(cell_p G, 1); \mathbb Z)$ is an abelian group having only $p$- and $q$-torsion.

Let us call $X = P_{M(C_q, 1)} K(cell_p G, 1)$, a space whose fundamental group is isomorphic to $P_q cell_p G$ and consider the
universal covering fibration sequence $$\tilde X \rightarrow X \rightarrow K(P_q cell_p G, 1).$$ An easy Serre spectral sequence
argument shows that $H_2(P_q cell_p G; \mathbb Z)$ is a quotient of $H_2(X; \mathbb Z)$ (this phenomenon had been already observed by Hopf, \cite{MR0006510}).
This second homology group must thus be $p$-torsion, which shows that $P_q cell_p G$ is $C_p$-cellular. Hence $P_q cell_p P_q cell_p G = P_q cell_p G$.
}
\end{example}

\begin{example}
\label{ex:idempotent2}
{\rm Often $L cell G$ is the trivial group. This happened in Example~\ref{ex:PCW}, and also for example when $G$ is a finite simple group,
$L=L_{ab}$ is abelianization (localization with respect to the homomorphism $\mathbb Z * \mathbb Z \rightarrow \mathbb Z \oplus \mathbb Z$)
and $cell G$ is any cellular cover of $G$. This comes from the fact that $cell G$ is a perfect group, \cite[Section~11]{MR2995665}.
}
\end{example}

\begin{example}
\label{ex:idempotent}
{\rm Let $L=L_{ab}$ be abelianization.
Since any cellularization functor transforms an abelian group into an abelian group, \cite[Theorem~1.4]{MR2269828},
$cell L_{ab} G$ is an abelian group which is thus both cellular and $L_{ab}$-local.
}
\end{example}


\begin{example}
\label{ex:idempotent3}
{\rm The inclusion $i\colon A_5 \hookrightarrow A_6$ is a localization by \cite[Section~3(i)]{MR1942308}. The group $A_5$ is $C_2$-cellular since
it is generated by elements of order $2$ and $H_2(A_5; \mathbb Z) \cong \mathbb Z/2$.

We choose thus $cell_2$ and $L_i$ so that $L_i cell_2 A_5 = A_6$. However $A_6$ is not $C_2$-cellular because $H_2(A_6; \mathbb Z) \cong \mathbb Z/6$.
In fact $cell_2 A_6$ must be a central extension with prime to $p$ torsion center, so it is $3.A_6$, the Valentiner group, a triple cover of $A_6$.
This group is not $i$-local as it contains $A_5$, but not $A_6$ (the central extension is not split). It is very likely that $L_i (3.A_6)$ is $A_6$, but
we have not found a proof this fact.
}
\end{example}

\section{Cellularizing the Burnside group}
Let $p$ be a prime and let $B$ be the\emph{ Burnside group} $B_{2,p}$, which is the quotient of a free group $F_2$ on two generators
by the normal subgroup generated by all $p$-th powers. Thus $B=L_p F_2$ is free in the variety of groups of exponent $p$. Alternatively,
and this is our starting point for the next computations, let $F$ be the free product $C_p * C_p$ of two copies of cyclic groups of order $p$.
This is a $C_p$-cellular group and $B \cong L_p F$ as $F$ is obtained by adding to the free group $F_2$ only relations which are exponent $p$ reduction.
We have thus $L_p cell_p F \cong B$.

The computation we present in this section is that of $cell_p B$, for a sufficiently large prime. By this we mean that $p\geq 665$, i.e.
$p$ is a prime at least equal to $671$. We rely on Ol'shanski\u\i's work
in \cite{MR1191619} where $p>10^{10}$, or rather on the improved bound $p\geq 665$ in Adian and Atabekyan's \cite{Central}.
Our next proposition is a particular case of the much more general statement
\cite[Proposition~3.9]{FloresMuro}. We identify $cell_p B$  by elementary methods for the sake of completeness, using in a crucial
way the strong link between homotopical and group theoretical cellularization.

\begin{proposition}\cite[Proposition~5.2]{FloresMuro}
\label{cellB}
Let $p$ be sufficiently large. The $C_p$-cellular approximation of $B$ fits in a central extension
\[
K \rightarrowtail cell_p B \twoheadrightarrow B
\]
where $K$ is isomorphic to $H_2(B; \mathbb Z)$, the Schur multiplier of $B$, a free abelian group in a countable number of generators.
\end{proposition}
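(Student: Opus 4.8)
The plan is to identify $cell_p B$ as the fundamental group of the homotopical cellularization $cell_{M(C_p,1)} K(B,1)$ via Proposition~\ref{nullification}~(2), and to compute this space using Chach\'olski's fibration from Theorem~\ref{wojtek}. First I would set up the cofibration sequence $\bigvee M(C_p, 1) \rightarrow K(B, 1) \rightarrow Cof$, where the wedge runs over all homotopy classes of maps $M(C_p, 1) \rightarrow K(B, 1)$. Since $B$ has exponent $p$, these maps detect the $p$-torsion in $B$, and I expect $Cof$ to be simply connected with $H_2(Cof; \mathbb Z)$ carrying the relevant homological information. The cellularization is then the homotopy fiber of $K(B, 1) \rightarrow P_{M(C_p, 2)} Cof$, and analyzing this fiber should exhibit the central extension.

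The key homological step is to control $H_*(Cof; \mathbb Z)$ and then $H_*(P_{M(C_p, 2)} Cof)$. From the cofibration sequence I would read off that $H_1(Cof) = 0$ (the map on $H_1$ is surjective because $B$ is generated by order $p$ elements, each factoring through $M(C_p,1)$) and that $H_2(Cof)$ fits into an exact sequence involving $\bigoplus H_1(M(C_p,1)) = \bigoplus C_p$ and $H_2(B; \mathbb Z)$. Because the nullification $P_{M(C_p, 2)}$ kills the $p$-torsion coming from the Moore spaces in dimension $2$ while preserving the non-$p$-torsion part, the resulting space should have fundamental group $B$ (so that the fiber sequence produces a central extension with quotient $B$) and the fiber's abelianization should recover $H_2(B; \mathbb Z)$. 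Identifying the fiber as a $K(K, 1)$ with $K \cong H_2(B;\mathbb Z)$ central is the geometric heart of the argument.

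The main obstacle will be justifying that the kernel $K$ is precisely the full Schur multiplier $H_2(B; \mathbb Z)$ with no extra $p$-torsion surviving or being destroyed, and that the extension is \emph{central}. Centrality should follow from the fact that $cell_p B \rightarrow B$ is a cellular cover, so its kernel is central by the general theory (a perfect-type argument, or directly from the fact that the fiber of the universal cover map injects centrally). The freeness and countable rank of $K$ come entirely from Ol'shanski\u\i's computation \cite[Corollary~31.2]{MR1191619}, as improved by Adian--Atabekyan \cite{Central}, which tells us that for $p \geq 665$ the Schur multiplier $H_2(B; \mathbb Z)$ is a free abelian group of countably infinite rank; I would simply invoke this as a black box. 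The delicate point is that $H_2(B;\mathbb Z)$ being torsion-free (in particular $p$-torsion-free) is exactly what guarantees that the nullification $P_{M(C_p,2)}$ does not alter it, so that all of $K$ is retained in the fiber rather than collapsed.

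I would conclude by assembling the universal covering fibration $\tilde{X} \rightarrow X \rightarrow K(P_{C_p}\text{-data}, 1)$ as in Example~\ref{ex:PCW3}, using a Serre spectral sequence comparison to match $H_2$ of the base with the Schur multiplier and to verify that the fiber contributes $K$ centrally. The forward reference to the full generality of \cite[Proposition~3.9]{FloresMuro} means that the essential content is already available; the elementary re-proof here is really an exercise in tracking the two torsion primes through Chach\'olski's construction, with the one genuinely external input being Ol'shanski\u\i's multiplier computation.
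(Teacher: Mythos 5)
Your proposal follows essentially the same route as the paper's proof: identifying $cell_p B$ as $\pi_1$ of the homotopical cellularization via Proposition~\ref{nullification}~(2), running Chach\'olski's fibration from Theorem~\ref{wojtek} on the cofibre $Cof$ (simply connected because $B$ is generated by order~$p$ elements), showing $H_2(Cof;\mathbb Z)$ is an extension of $\oplus\,\mathbb Z/p$ by $H_2(B;\mathbb Z)$, and using Ol'shanski\u\i--Adian--Atabekyan's torsion-freeness of the Schur multiplier to see that the nullification $P_{M(C_p,2)}$ kills exactly the $p$-torsion, so the long exact sequence yields the stated extension. The only real divergence is your justification of centrality via the general theory of cellular covers (valid, and citable to \cite{MR2269828}), where the paper instead relies on the standard fact that the image of $\pi_2$ of the base in $\pi_1$ of the fibre is central when the total space $K(B,1)$ is aspherical; your stray remarks that the nullified cofibre ``has fundamental group $B$'' and that the fibre's abelianization recovers $H_2(B;\mathbb Z)$ are inaccurate (the base is simply connected, and $B$ is $\pi_1$ of the total space), but they are not load-bearing for the argument.
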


\begin{proof}
We have seen in Proposition~\ref{nullification}~(2) that
$cell_p B$ is isomorphic to the fundamental group of $cell_{M(C_p, 1)} K(B, 1)$. Our goal is thus to compute this cellularization of
the Eilenberg--Mac Lane space $K(B, 1)$ (in the category of pointed spaces). For this purpose we use Chach\'olski's Theorem~\ref{wojtek}
and start with the cofibration sequence
\[
\bigvee M(C_p, 1) \rightarrow K(B, 1) \rightarrow Cof
\]
where the wedge is taken over representatives of all homotopy classes from $M(C_p, 1)$ to $K(B, 1)$.
The cellularization $cell_{M(C_p, 1)} K(B, 1)$
is then the homotopy fiber of the composite map $K(B, 1) \rightarrow Cof \rightarrow P_{M(C_p, 2)} Cof$.

Since $B$ is generated by elements of order $p$, the cofiber $Cof$ is simply connected and $H_2(Cof; \mathbb Z)$ is an extension of
a direct sum $\oplus \mathbb Z/p$ by $H_2(B; \mathbb Z)$. The latter is a free abelian group in a countable number of generators
by \cite[Corollary~31.2]{MR1191619} or \cite{Central}, this is where we use that $p$ is large. When nullifying with respect to the simply connected space
$M(C_p, 2) = S^2 \cup_p e^3$ one kills the $p$-torsion in $H_2(Cof; \mathbb Z)$ so that only a free abelian group is left in
$H_2(P_{M(C_p, 2)} Cof; \mathbb Z) \cong \pi_2 P_{M(C_p, 2)} Cof$. The homotopy long exact sequence associated to Chach\'olski's
fibration sequence $cell_{M(C_p, 1)} K(B, 1) \rightarrow K(B, 1) \rightarrow P_{M(C_p, 2)} Cof$ yields an extension
\[
\pi_2 P_{M(C_p, 2)} Cof \rightarrowtail \pi_1 cell_{M(C_p, 1)} K(B, 1) \twoheadrightarrow B
\]
which allows us to conclude.
\end{proof}

\begin{remark}
\label{rem:smallp}
{\rm When $p=2$, the Burnside group $B = C_2 \times C_2$ is $C_2$-cellular, i.e. $cell_2 B = B$. At the prime $3$,
$B$ is an extraspecial nilpotent group of class $2$ of order $3^3$, hence $C_3$-cellular since $H_2(B; \mathbb Z)$ is $3$-torsion,
see \cite[Proposition~4.8]{MR2272149}. At the prime $5$ it is not known whether $B$ is finite and we do not know what $cell_p B$
looks like at a ``small'' prime $5 \leq p < 671$.}
\end{remark}

\section{Localizing the cellularized Burnside group}










From now on we will denote $cell_p B$ by $C$ and $p$ is a sufficiently large prime.
In this section we compute $L_p C$ and the next lemma will help us on the way.

\begin{lemma}
\label{central}
The fiberwise $L_p$-localization of the extension $C$ is a central extension $L_p K \rightarrowtail E \twoheadrightarrow B$.
\end{lemma}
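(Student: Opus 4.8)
The plan is to apply Theorem~\ref{fgl}, the Casacuberta--Descheemaeker fiberwise localization, to the central extension
\[
K \rightarrowtail C \twoheadrightarrow B
\]
furnished by Proposition~\ref{cellB}. Here $K \cong H_2(B;\mathbb Z)$ is a free abelian group of countable rank. Feeding this extension into Theorem~\ref{fgl} with the localization functor $L = L_p$ produces a commutative diagram
\[
\xymatrix{
K \ar@{>->}[r] \ar[d] & C \ar@{->>}[r] \ar[d]^{f_2} & B \ar@{=}[d] \\
L_p K \ar@{>->}[r] & E \ar@{->>}[r] & B}
\]
in which the left vertical map is $L_p$-localization and $f_2$ is an $L_p$-local equivalence. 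The top row is the extension defining $C$, so the output is precisely an extension $L_p K \rightarrowtail E \twoheadrightarrow B$ with the correct quotient~$B$. Since $K$ is free abelian, $L_p K$ is simply a countable direct sum of copies of $C_p$, and the first nontrivial assertion of the lemma — that the normal subgroup of the new extension is $L_p K$ — is immediate from the construction.

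The substantive content of the lemma is that the resulting extension is \emph{central}. First I would record that centrality is inherited well under localization: the original extension $K \rightarrowtail C \twoheadrightarrow B$ is central by Proposition~\ref{cellB}, so $B$ acts trivially by conjugation on $K$. The conjugation action of $B$ on the new kernel $L_p K$ is induced from the action on $K$ via the functoriality of $L_p$ (the coaugmentation $K \to L_p K$ is natural and $B$-equivariant), and since $B$ acts trivially on $K$ it acts trivially on $L_p K$ as well. What must be checked is that the extension structure $L_p K \rightarrowtail E \twoheadrightarrow B$ that Theorem~\ref{fgl} produces genuinely realizes this trivial conjugation action, i.e. that $L_p K$ lands in the center of $E$. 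This is the place where I would work most carefully with the construction underlying the fiberwise localization theorem, tracing how conjugation in $E$ restricts to $L_p K$ through the natural map $C \to E$ induced by $f_2$.

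I expect the main obstacle to be exactly this transfer of centrality through $f_2$, since the fiberwise localization is not constructed as a literal pushout of groups and the map $C \to E$ need not be injective on the kernel. The cleanest route is presumably to use the explicit description of $E$ as built from $C$ and $L_p K$: because $f_2$ is an $L_p$-local equivalence restricting to the localization $K \to L_p K$ on kernels, any element of $E$ is (up to $L_p$-local modifications) the image of an element of $C$, and conjugation of $f_2(K) \subseteq E$ by such images is trivial since $K$ is central in $C$. One then promotes this to centrality of the full localized kernel $L_p K$ using that $L_p K$ is generated by the image of $K$ together with the universal property of the coaugmentation, so that the conjugation action, being trivial on generators and continuous with respect to the colimit presentation of $L_p K$, is trivial everywhere. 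Carrying out this last density/continuity argument rigorously — reducing centrality on all of $L_p K$ to centrality on the image of $K$ — is the step I would flag as requiring the most care.
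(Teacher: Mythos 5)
Your setup coincides with the paper's, and you have correctly located the difficulty, but the proposal leaves it unresolved --- and the resolution is the actual content of the paper's proof. Your transfer argument shows only that $f_2(K)$ is centralized by elements of the form $f_2(c)$ with $c \in C$; to conclude $L_pK \subseteq Z(E)$ you need every element of $E$ to be such an image, that is, you need $f_2$ to be an \emph{epimorphism}. The phrase ``up to $L_p$-local modifications'' cannot substitute for this: an $L_p$-local equivalence need not be surjective (for instance $\{1\} \hookrightarrow \mathbb Q$ is an $L_p$-equivalence, since every homomorphism from $\mathbb Q$ to a group of exponent $p$ is trivial), and nothing in the statement of Theorem~\ref{fgl} asserts surjectivity of $f_2$. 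The missing observation is that $K \cong H_2(B;\mathbb Z)$ is \emph{free abelian}, so the coaugmentation $\eta\colon K \to L_pK$ is reduction mod $p$, i.e.\ the surjection $K \twoheadrightarrow K/pK \cong \oplus\, \mathbb Z/p$. With $\eta$ and $\epsilon$ surjective and the right-hand vertical map the identity, the Five Lemma (or a one-line chase: given $e \in E$, lift $\pi(e)$ to $c \in C$; then $e f_2(c)^{-1} \in L_pK = \eta(K) = f_2(K)$, so $e \in f_2(C)$) shows that $f_2$ is an epimorphism. The lemma is then immediate: an epimorphism carries the center into the center, $K \subseteq Z(C)$ because the extension of Proposition~\ref{cellB} is central, hence $L_pK = f_2(K) \subseteq f_2(Z(C)) \subseteq Z(E)$. (The paper also identifies $Z(C)=K$ exactly, using Adian's theorem that $B$ is centerless, but for the lemma only the inclusion $K \subseteq Z(C)$ is needed.)

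Two smaller points. First, the surjectivity of $\eta$ also dissolves the ``density/continuity'' step you flag at the end: the image of $K$ is not merely a generating set of $L_pK$, it \emph{equals} $L_pK$, so there is nothing to promote (and in any case conjugation by a fixed element is a homomorphism, so triviality on a generating set already implies triviality on the subgroup it generates --- no colimit argument would be needed). Second, your alternative route via functoriality of the $B$-action on the kernel would require knowing that the Casacuberta--Descheemaeker construction yields the extension whose $B$-action on $L_pK$ is the one induced by applying $L_p$ to the action on $K$; this is plausible but is not part of the statement of Theorem~\ref{fgl}, so that route would force you to open up the construction of \cite{MR2125447}, which the epimorphism argument avoids entirely.
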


\begin{proof}
If we apply fibrewise group localization (Theorem~\ref{fgl}) to the extension in Proposition~\ref{cellB},
we obtain a commutative diagram of homomorphisms:
\[
\xymatrix{
K \ar@{>->}[r] \ar[d]^{\eta} & C \ar@{->>}[r]^\epsilon \ar[d]^{f} & B \ar@{=}[d] \\
L_p K \ar@{>->}[r] & E \ar@{->>}[r]^\pi & B}
\]
We observe that the coaugmentation $\eta: K \rightarrow L_p K$ is exponent $p$ reduction, i.e. $L_p K \cong \oplus \mathbb Z/p$ is a
countable $\mathbb{F}_p$-vector space. We deduce then from the Five Lemma that $f: C \rightarrow E$ is an epimorphism, which sends
by naturality the center of $C$ to the center of $E$. The Burnside group $B$ is known to be centerless  by \cite[Theorem~VI.3.4]{MR537580},
so that $Z(C) = K$. Therefore $L_p K$ is contained in $Z(E)$, in other words the extension
$L_p K \rightarrowtail E \twoheadrightarrow B$ is central.
\end{proof}

\begin{proposition}
\label{Eexpp}
Let $E$ be the fiberwise $L_p$-localization of the extension $C$. Then $E$ is a group of exponent $p$.
\end{proposition}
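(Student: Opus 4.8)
The plan is to show that every element of $E$ has order dividing $p$, using the central extension $L_p K \rightarrowtail E \twoheadrightarrow B$ established in Lemma~\ref{central}. Since $B$ has exponent $p$ and $L_p K \cong \oplus \mathbb{Z}/p$ is also of exponent $p$, the obstruction to $E$ having exponent $p$ is purely a question about the extension: for any $e \in E$, its image $\pi(e)$ satisfies $\pi(e)^p = 1$ in $B$, so $e^p \in L_p K$ lies in the central kernel, and I must show $e^p = 1$. First I would recall that $E$ is by construction an $L_p$-local group, being the fiberwise localization; equivalently, $E$ receives the universal map from $C$ inverting $\varphi\colon F_1 \to C_p$, so $E$ is a group in the variety of exponent $p$ groups \emph{provided} one knows $E$ is $L_p$-local. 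The cleanest route is to argue directly that $E$ is $\varphi$-local and conclude exponent $p$ from Example~\ref{ex:loc}, where $L_p$ is described as left adjoint to the inclusion of the variety of exponent $p$ groups.

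The subtle point is that fiberwise localization produces a group $E$ that sits in an $L_p$-local extension, but $E$ itself is not literally obtained by applying $L_p$ to $C$; rather, $f_2\colon C \to E$ is an $L_p$-local equivalence and $f_1\colon K \to L_p K$ is the localization of the fiber. So I cannot immediately invoke that $E$ is $L_p$-local. The key step is therefore to verify that a central extension of an exponent $p$ group by an exponent $p$ group, which is moreover the fiberwise localization, genuinely has exponent $p$. I would exploit the centrality: for $e \in E$ the power map $e \mapsto e^p$ descends through the extension because the kernel is central, and the commutator structure of a central extension means the $p$-th power map is controlled by the bilinear commutator pairing $B \times B \to L_p K$ together with a quadratic correction. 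Concretely, raising to the $p$-th power in a central extension of $B$ by the abelian group $L_p K$ is governed by the formula relating $(xy)^p$ to $x^p y^p$ times a product of commutators; since $p$ is an odd prime (indeed large) and $B$ has exponent $p$, the relevant expressions collapse.

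The main obstacle I expect is handling the $p$-th power map precisely in the central extension. In a central extension with abelian kernel, one has $(ab)^p = a^p b^p [b,a]^{\binom{p}{2}}$ when $[b,a]$ is central, and the commutators themselves lie in $L_p K$, which has exponent $p$; since $p \mid \binom{p}{2}$ for odd $p$, these commutator corrections vanish. Thus the $p$-th power map $E \to L_p K$ is a homomorphism, and I would show it is trivial by checking it on a generating set: $E$ is generated by lifts of the order $p$ generators of $B$ together with $L_p K$, and each generator can be chosen to have a lift whose $p$-th power is trivial because $C = cell_p B$ is generated by elements of order $p$ and $f\colon C \to E$ is surjective by Lemma~\ref{central}. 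Since elements of order $p$ in $C$ map to elements of order dividing $p$ in $E$, and the $p$-th power map is a homomorphism vanishing on a generating set, it vanishes identically, giving $E$ exponent $p$. The delicate verification is precisely that the $\binom{p}{2}$ and higher correction terms really do die, which is where the oddness (and in fact largeness, via the vanishing of $H_2(B)$-torsion obstructions) of $p$ enters, so I would isolate and state that commutator computation carefully as the crux of the argument.
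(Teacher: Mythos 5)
Your argument has a genuine gap at its crux: the claim that the $p$-th power map on $E$ is a homomorphism. You justify this by asserting that ``the commutators themselves lie in $L_p K$'', but that is false. Centrality of the extension $L_p K \rightarrowtail E \twoheadrightarrow B$ means only that $L_p K \subseteq Z(E)$; it does not mean $[E,E] \subseteq L_p K$, which would force the quotient $B \cong E/L_pK$ to be abelian. Here $B$ is the free Burnside group of exponent $p$ on two generators, which for large $p$ is an infinite, highly nonabelian group. Consequently, for general $a, b \in E$ the commutator $[b,a]$ projects to the nontrivial commutator $[\pi(b),\pi(a)]$ in $B$ and is not central, so the identity $(ab)^p = a^p b^p [b,a]^{\binom{p}{2}}$ (valid only when the subgroup $\langle a,b\rangle$ is nilpotent of class at most $2$) does not apply. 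Without the homomorphism property, verifying $x^p=1$ on a generating set proves nothing: being generated by order-$p$ elements does not imply exponent $p$ (the group $C = cell_p B$ itself is generated by order-$p$ elements, yet it contains the torsion-free group $K$). Note also that the general principle you set out to prove --- that a central extension of an exponent-$p$ group by an exponent-$p$ group has exponent $p$ --- is simply false: the extraspecial group of order $p^3$ and exponent $p^2$ is a central extension of $C_p \times C_p$ by $C_p$. Some input beyond centrality is indispensable. Finally, the largeness of $p$ plays no role at this stage; it is used earlier, in Proposition~\ref{cellB}, to identify $K$ as free abelian, and invoking $H_2(B)$-torsion obstructions in the power computation is a red herring.

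The paper's proof avoids commutator calculus entirely by treating an \emph{arbitrary} element $x \in E$, not just generators, and by using the full strength of the cellularization property rather than merely ``$C$ is generated by order-$p$ elements''. Since $C = cell_p B$, the augmentation induces a bijection $\epsilon_*\colon \Hom(C_p, C) \to \Hom(C_p, B)$, so the element $b = \pi(x)$, which has order dividing $p$, admits a lift $c \in C$ of order dividing $p$ with $\epsilon(c) = b$. Commutativity of the fiberwise localization diagram gives $\pi(f(c)) = \epsilon(c) = \pi(x)$, hence $x = f(c)\,t$ for some $t \in L_p K$. Now $f(c)^p = 1$, $t^p = 1$, and $t$ is central, so $x^p = f(c)^p t^p = 1$. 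The only commutation used is between $f(c)$ and the central element $t$, which is exactly what Lemma~\ref{central} provides. If you want to salvage your outline, this element-by-element lifting through the $\Hom$-bijection is the step you must substitute for the unavailable homomorphism property of the power map.
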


\begin{proof}
As $L_f K$ and $B$ are groups of exponent $p$, $E$ is a group whose non-trivial elements can have order $p$ and $p^2$.
We should check that the latter is impossible.
Let $x$ be any element in $E$ and set $b=\pi(x)$. As $C=cell_p B$ is the $C_p$-cellularization of $B$, the augmentation induces a bijective map
\[
\epsilon_*\colon \textrm{Hom}(C_p,C) \cong \textrm{Hom}(C_p,B)
\]
and this guarantees that there exists a preimage $c \in \epsilon^{-1}(b)$ of order $p$ (or one if $b = 1$).
By commutativity of the diagram in the proof of Lemma~\ref{central}, $\pi(x)=\epsilon(c)=\pi(f(c))$, and hence $x=f(c)t$ for a certain element $t\in L_p K$.
Since $t$ belongs to $L_f K$, a group of exponent~$p$, both $c$ and $t$ have order (at most) $p$. Moreover $t$ is central in $E$
so that $x^p=(f(c)t)^p=f(c)^pt^p=1$, and we are done.
\end{proof}

\begin{corollary}
The extension $E$ is the $L_p$-localization of $C$.
\end{corollary}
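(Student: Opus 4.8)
The plan is to show that the map $f\colon C \rightarrow E$ constructed via fiberwise localization is precisely the $L_p$-localization coaugmentation $\eta_C\colon C \rightarrow L_pC$. By the characterization of localization recalled in Section~\ref{sec:back}, it suffices to verify two things: first, that $E$ is $L_p$-local, i.e. a group of exponent $p$; and second, that $f$ is an $L_p$-local equivalence. The first point is exactly the content of Proposition~\ref{Eexpp}, which tells us that $E$ has exponent $p$ and is therefore $L_p$-local by the description of $L_p$ in Example~\ref{ex:loc} as left adjoint to the inclusion of the variety of groups of exponent~$p$.

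It remains to argue that $f$ is an $L_p$-local equivalence. Here I would lean directly on Theorem~\ref{fgl}: the Casacuberta--Descheemaeker fiberwise localization produces the diagram in the proof of Lemma~\ref{central} precisely so that $f_2$, which is our $f$, is already an $L_p$-local equivalence. This is part of the conclusion of their theorem, so no extra work is needed beyond invoking it. Combining these two facts, $f\colon C \rightarrow E$ is an $L_p$-local equivalence to an $L_p$-local group, which by the defining universal property means that $f$ is (up to canonical isomorphism) the coaugmentation $C \rightarrow L_pC$, so $E \cong L_pC$.

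The only genuinely substantive input was already dispatched in Proposition~\ref{Eexpp} (that $E$ has exponent $p$), and the local-equivalence half comes for free from the construction. Thus I expect the proof of the corollary to be essentially a one-line assembly: $E$ is $L_p$-local and $C \rightarrow E$ is an $L_p$-local equivalence, hence $E$ realizes $L_pC$. If any subtlety arises, it would be in confirming that the fiberwise-localization output $f$ genuinely satisfies the \emph{local equivalence} condition of $L_p$ in the group-theoretic sense, rather than some weaker map-inverting property; but since Theorem~\ref{fgl} is stated for an arbitrary localization functor $L$ and asserts exactly that $f_2$ is an $L$-local equivalence, specializing to $L=L_p$ closes this gap immediately.
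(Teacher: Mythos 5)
Your proposal is correct and follows exactly the paper's own argument: $E$ is $L_p$-local by Proposition~\ref{Eexpp}, $f\colon C \to E$ is an $L_p$-local equivalence by Theorem~\ref{fgl}, and the universal characterization of localization recalled in Section~\ref{sec:back} then identifies $E$ with $L_pC$. The paper's proof is the same one-line assembly, so there is nothing to add.
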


\begin{proof}
Because of Theorem \ref{fgl}, the map $f:C\rightarrow E$ is an $L_p$-equivalence.
By the previous proposition, $E$ is $L_p$-local, and then $L_p C \cong E$.
\end{proof}

In fact, we can say more about the structure of $E$.

\begin{proposition}
\label{splitting}
The extension $E$ splits as a direct product $E\cong L_p K\times B$.
\end{proposition}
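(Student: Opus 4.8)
The plan is to exhibit an explicit homomorphic section of the projection $\pi\colon E \twoheadrightarrow B$ and then invoke the centrality of the kernel established in Lemma~\ref{central} to upgrade a mere splitting into a direct product decomposition. The whole argument rests on the fact, recalled at the start of Section~3, that $B = B_{2,p}$ is \emph{free} in the variety of groups of exponent~$p$ on two generators: this is precisely the universal property that lets us build a section once we know $E$ lives in that variety, which is exactly the content of Proposition~\ref{Eexpp}.

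Concretely, I would first choose elements $x_1, x_2 \in E$ lifting the two free generators $\bar x_1, \bar x_2$ of $B$ along $\pi$. Because $E$ has exponent $p$ by Proposition~\ref{Eexpp}, the assignment of the generators of $B$ to $x_1$ and $x_2$ extends, by the universal property of the free exponent-$p$ group, to a (unique) homomorphism $s\colon B \to E$. Next I would check that $s$ is a section: the composite $\pi \circ s\colon B \to B$ sends each $\bar x_i$ to $\pi(x_i) = \bar x_i$, and since a homomorphism out of $B$ is determined by its values on the two generators, $\pi \circ s$ must coincide with $\mathrm{id}_B$. Thus $s$ splits the central extension $L_p K \rightarrowtail E \twoheadrightarrow B$.

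It then remains to convert this splitting into a direct product. Since $L_p K$ is central in $E$ by Lemma~\ref{central}, the map $L_p K \times B \to E$ given by $(t, b) \mapsto t \cdot s(b)$ is a group homomorphism: centrality of $t$ together with $s$ being a homomorphism gives $t_1 s(b_1)\, t_2 s(b_2) = t_1 t_2\, s(b_1 b_2)$. This map is surjective because $s$ is a section, and injective because $\ker \pi = L_p K$ and $s$ is injective; hence it is an isomorphism $E \cong L_p K \times B$, as claimed.

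There is essentially no hard step left here, since the two genuinely delicate facts have already been secured upstream: that $E$ has exponent $p$ (Proposition~\ref{Eexpp}, which uses the cellular bijection on $\mathrm{Hom}(C_p, -)$) and that the kernel is central (Lemma~\ref{central}, which uses that $B$ is centerless). The only point demanding a little care is making sure the universal property is applied correctly, namely that freeness in the variety of exponent-$p$ groups guarantees \emph{both} the existence of $s$ and the uniqueness forcing $\pi \circ s = \mathrm{id}_B$; once $E$ is known to have exponent $p$, no obstruction to lifting the generators can arise, which is what makes the splitting automatic.
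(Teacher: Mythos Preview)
Your proof is correct and follows essentially the same line as the paper's: construct a section of $\pi$ using that $B$ is free on two generators in the variety of exponent-$p$ groups and that $E$ has exponent~$p$ (Proposition~\ref{Eexpp}), then use centrality (Lemma~\ref{central}) to pass from a split extension to a direct product. The only cosmetic difference is that the paper routes the construction through the free group $F_2$ and the universal property of the localization $\eta\colon F_2 \to B$, whereas you invoke the universal property of $B$ in the variety directly; you also spell out the isomorphism $L_pK \times B \to E$ explicitly, which the paper leaves implicit.
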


\begin{proof}
We will prove that the map $\pi\colon E \rightarrow B$ has a section. As the extension is central by Lemma~\ref{central}, it must
then split as a direct product.

The Burnside group $B$ is free, in the variety of groups of exponent $p$,
on two generators $\bar x$ and $\bar y$. Let us choose preimages $x$ and $y$ in $E$. Remember that $B$ is $L_p F_2$ and call $a$ and $b$
two generators of the free group $F_2$ that are taken by the coaugmentation $\eta\colon F_2 \rightarrow B$ to $\bar x$ and $\bar y$ respectively.

Since $F_2$ is free, there exists a unique lift $q\colon F_2 \rightarrow E$ (taking $a$ to $x$ and $b$ to $y$).
By Proposition~\ref{Eexpp} $E$ is a group of exponent $p$, hence $q$ factors uniquely through a homomorphism $g\colon B\rightarrow E$.
This means that the composite
\[
\pi \circ g \circ \eta: F_2 \rightarrow B \rightarrow E \rightarrow B
\]
coincides with the coaugmentation $\eta$.
The universal property of localization shows then that $\pi \circ g$ is the identity, i.e. $g$ is a section of $\pi$.
\end{proof}

\begin{remark}
{\rm Note that a similar splitting cannot exist for the extension that defines~$C$. If it did exist, this would imply that the free abelian group $K$
would be a retract of a $C_p$-cellular space, hence $C_p$-cellular itself.
}
\end{remark}

Now we have produced one of our two desired counterexamples.

\begin{theorem}
The groups $L_p cell_p F$ and $L_p cell_p L_p cell_p F$ are not isomorphic.
\end{theorem}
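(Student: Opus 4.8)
The plan is to evaluate both iterates explicitly, using the computations already established, and then to separate the two resulting groups by a single group-theoretic invariant, namely the center.

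First I would identify $L_p cell_p F$. Since $F = C_p * C_p$ is $C_p$-cellular, the augmentation $cell_p F \to F$ is an isomorphism, whence $L_p cell_p F \cong L_p F \cong B$; this is exactly the identity $L_p cell_p F \cong B$ recorded at the start of the section on cellularizing the Burnside group. Next I would compute the double iterate. Substituting $L_p cell_p F \cong B$ gives
\[
L_p cell_p \bigl(L_p cell_p F\bigr) \;\cong\; L_p cell_p B \;=\; L_p C,
\]
where $C = cell_p B$ as before. By the Corollary we have $L_p C \cong E$, and by Proposition~\ref{splitting} the group $E$ splits as the direct product $L_p K \times B$. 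Here $L_p K \cong \oplus\, \mathbb{Z}/p$ is a nontrivial, countably infinite-dimensional $\mathbb{F}_p$-vector space, as observed in the proof of Lemma~\ref{central}.

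It then remains to show that $B \not\cong L_p K \times B$, which I would do by comparing centers. The Burnside group $B$ is centerless, so $Z(B) = \{1\}$. On the other hand, the center of a direct product is the product of the centers, and since $B$ is centerless this yields $Z(L_p K \times B) = L_p K \times \{1\} \cong \oplus\, \mathbb{Z}/p$, which is nontrivial. A group with trivial center cannot be isomorphic to one with nontrivial center, so the two groups $L_p cell_p F$ and $L_p cell_p L_p cell_p F$ differ, completing the argument.

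I expect no serious obstacle: all of the heavy lifting—the Schur multiplier computation for large Burnside groups in Proposition~\ref{cellB} and the splitting of the localized extension in Proposition~\ref{splitting}—is already in place. The only point requiring a moment's thought is the choice of distinguishing invariant, and the center is precisely the right one: localization converts the free abelian kernel $K$ into the \emph{central} direct factor $L_p K$ (Lemma~\ref{central} together with Proposition~\ref{splitting}), whereas $B$ has no center to begin with, so the extra factor cannot be reabsorbed.
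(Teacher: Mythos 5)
Your proposal is correct and follows essentially the same route as the paper: identify $L_p cell_p F \cong B$ via the $C_p$-cellularity of $F = C_p * C_p$, identify the double iterate with $E \cong L_p K \times B$ via Proposition~\ref{splitting}, and distinguish the two groups by their centers, using that $B$ is centerless while $L_p K$ is a nontrivial central factor. The paper's proof is exactly this argument, stated more briefly.
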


\begin{proof}
We have proved that $L_p cell_p F$ is the Burnside group $B$, which is centerless.
On the other hand, Proposition~\ref{splitting} implies that $L_p cell_p L_p cell_p F = E$ has (many) order $p$ elements in the center.
\end{proof}

\section{Cellularizing one step further}

The next result is an easy consequence of Proposition \ref{splitting}.

\begin{proposition}
\label{cellE}
We have an isomorphism $cell_p E \cong L_p K\times C$.
\end{proposition}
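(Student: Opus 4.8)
The plan is to compute $cell_p E$ by exploiting the direct product decomposition $E \cong L_p K \times B$ established in Proposition~\ref{splitting}, and to show that cellularization distributes over this product in the expected way. The target isomorphism $cell_p E \cong L_p K \times C$ suggests that the factor $B$ contributes its full cellularization $cell_p B = C$, while the factor $L_p K \cong \oplus \mathbb{Z}/p$, being an elementary abelian $p$-group, is already $C_p$-cellular and survives untouched.

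First I would observe that $L_p K$, a countable direct sum of copies of $C_p$, lies in the class $\mathcal{C}(C_p)$ and is therefore its own $C_p$-cellularization; an arbitrary direct sum of cellular groups is cellular since $\mathcal{C}(C_p)$ is closed under colimits. Next, since $C = cell_p B$ is $C_p$-cellular and $L_p K$ is $C_p$-cellular, the product $L_p K \times C$ is itself $C_p$-cellular (the class $\mathcal{C}(C_p)$ is closed under the relevant colimit constructions, and finite products of cellular groups built from the same $A$ remain cellular). It then remains to verify that the natural augmentation-type comparison map $L_p K \times C \to E$ induced by the inclusion $L_p K \hookrightarrow E$ on the first factor and by $f \circ \epsilon$-type data on the second is a $C_p$-cellular equivalence, i.e. that it induces a bijection on $\mathrm{Hom}(C_p, -)$.

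Concretely, I would analyze $\mathrm{Hom}(C_p, E)$ using the central extension $L_p K \rightarrowtail E \twoheadrightarrow B$ from Lemma~\ref{central}. Since both $L_p K$ and $B$ have exponent $p$ and the extension is central, every order $p$ element of $E$ maps to an order $p$ element (or the identity) of $B$, and conversely the bijection $\epsilon_* \colon \mathrm{Hom}(C_p, C) \cong \mathrm{Hom}(C_p, B)$ from the proof of Proposition~\ref{Eexpp} controls the lifts. The key point is that homomorphisms $C_p \to E$ are accounted for precisely by homomorphisms into the two factors of $L_p K \times C$: those landing in the central $L_p K$, and those whose image in $B$ lifts through $C$. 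Matching these up should yield the required isomorphism $cell_p E \cong L_p K \times C$.

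The main obstacle I anticipate is making the comparison map rigorous rather than merely plausible: one must produce an explicit map $L_p K \times C \to E$, check it is a cellular equivalence on $\mathrm{Hom}(C_p, -)$, and confirm that the source is genuinely cellular and not just a group from which $E$ is built. The subtlety is that while $L_p K \times C$ is easily seen to be cellular and to receive the natural data, verifying the \emph{equivalence} condition requires careful bookkeeping of how order $p$ elements distribute across the central extension; in particular one must ensure no order $p$ elements of $E$ are missed and that the central $L_p K$ genuinely contributes independent homomorphisms. Since Proposition~\ref{splitting} already hands us the clean splitting $E \cong L_p K \times B$, I expect this bookkeeping to go through, with the cellularization naturally replacing the $B$ factor by $C = cell_p B$ and leaving the already-cellular central factor $L_p K$ fixed.
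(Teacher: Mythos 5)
Your overall strategy---split $E$ as $L_p K \times B$ via Proposition~\ref{splitting} and let cellularization act factorwise---is the same as the paper's, but the paper disposes of the key step by citing \cite[Lemma~2.3(1)]{MR2269828}: the cellularization of a direct product is the product of the cellularizations. Applied to the splitting, this gives $cell_p E \cong cell_p(L_p K) \times cell_p B \cong L_p K \times C$ in one line. Your proposal instead verifies the universal characterization by hand, and this is where there is a genuine gap: you justify the claim that $L_p K \times C$ is $C_p$-cellular by saying that ``$\mathcal{C}(C_p)$ is closed under the relevant colimit constructions, and finite products of cellular groups built from the same $A$ remain cellular.'' A direct product of groups is a \emph{limit}, not a colimit, so closure of $\mathcal{C}(C_p)$ under colimits gives you nothing here. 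The assertion that a product of $C_p$-cellular groups is again $C_p$-cellular is true, but it is precisely the content of the cited lemma of Farjoun--G\"obel--Segev (equivalently, of the product formula), not a formal consequence of the definition of $\mathcal{C}(C_p)$; as written, the crucial cellularity claim is unsupported. (By contrast, your claim that $L_p K \cong \oplus\, \mathbb{Z}/p$ is $C_p$-cellular is fine: it is a directed union of finite elementary abelian $p$-groups, each of which is $C_p$-cellular, and directed unions genuinely are colimits.)

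The other half of your argument---that the comparison map $L_p K \times C \to E$ is a $C_p$-cellular equivalence---is correct, but you make it sound harder than it is: no bookkeeping along the central extension is needed. Once the splitting is in hand, a homomorphism from $C_p$ into a direct product is just a pair of homomorphisms, so $\mathrm{Hom}(C_p, E) \cong \mathrm{Hom}(C_p, L_p K) \times \mathrm{Hom}(C_p, B)$ and $\mathrm{Hom}(C_p, L_p K \times C) \cong \mathrm{Hom}(C_p, L_p K) \times \mathrm{Hom}(C_p, C)$; the map $\mathrm{id} \times \epsilon$ induces a bijection between them because $\epsilon_*\colon \mathrm{Hom}(C_p, C) \to \mathrm{Hom}(C_p, B)$ is one. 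So your proof closes completely the moment you either cite \cite[Lemma~2.3(1)]{MR2269828} for the cellularity of $L_p K \times C$ or supply an independent proof of that fact; without that input, the argument does not go through.
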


\begin{proof}
The cellularization of a product is the product of the cellularizations, see for example \cite[Lemma~2.3(1)]{MR2269828}.
We have then $cell_p E \cong cell_p (L_p K \times B) \cong L_p K \times C$ because $L_p K$ is $C_p$-cellular.
\end{proof}

From this we obtain our second counterexample.

\begin{theorem}
The groups $cell_p L_pB$ and $cell_p L_p cell_p L_p B$ are not isomorphic.
\end{theorem}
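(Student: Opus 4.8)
The plan is to identify both groups explicitly and then separate them by a simple invariant, namely the center. First I would record that the Burnside group $B = L_p F_2$ has exponent $p$ and is therefore already $L_p$-local, so that $L_p B \cong B$. It follows at once that $cell_p L_p B \cong cell_p B = C$, the group computed in Proposition~\ref{cellB}. Moreover its center is the free abelian group $K$: this was established inside the proof of Lemma~\ref{central}, where the fact that $B$ is centerless forces $Z(C) = K$.

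Next I would unwind the second, longer composition from the inside out, using the results already proved. Since $L_p B \cong B$, we get $cell_p L_p B \cong C$; applying $L_p$ gives $L_p cell_p L_p B \cong L_p C \cong E$, the $L_p$-localization of $C$ identified just after Proposition~\ref{Eexpp}; and a final cellularization yields $cell_p L_p cell_p L_p B \cong cell_p E$. By Proposition~\ref{cellE} this last group is $L_p K \times C$. Thus the two groups to be compared are $C$ and $L_p K \times C$.

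The decisive step is then to observe that these cannot be isomorphic, because their centers are not. The center of $C$ equals $K$, a free abelian group, hence torsion-free. In contrast the center of $L_p K \times C$ is $L_p K \times Z(C) = L_p K \times K$, and $L_p K \cong \oplus \mathbb{Z}/p$ is a nontrivial $\mathbb{F}_p$-vector space (as recorded in the proof of Lemma~\ref{central}), so this center carries nontrivial $p$-torsion. Since any group isomorphism restricts to an isomorphism of centers, $C \not\cong L_p K \times C$, and the theorem follows.

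I do not expect a genuine obstacle in this final argument: the real work has been front-loaded into the computation of $cell_p B$ and its localization, so that Propositions~\ref{splitting} and~\ref{cellE} do the heavy lifting. The only points demanding a little care are the bookkeeping of the four-fold composition and the remark that $B$ is $L_p$-local; once these are settled, the torsion in the center provides an immediate and robust invariant distinguishing the two groups.
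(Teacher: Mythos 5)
Your proposal is correct and follows essentially the same route as the paper: identify $cell_p L_p B$ with $C$, identify the iterated composite with $cell_p E \cong L_p K \times C$ via Proposition~\ref{cellE}, and distinguish the two groups by their centers ($K$ torsion-free versus $L_p K \times K$ containing $p$-torsion). The only difference is cosmetic: you spell out $Z(L_p K \times C) = L_p K \times K$ explicitly, whereas the paper simply notes the presence of many order~$p$ central elements.
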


\begin{proof}
The Burnside group $B$ is $L_p$-local and $cell_p L_pB$ is the group $C$.
The group $cell_p L_p cell_p L_p B = cell_p L_p C = cell_p E$ is the direct product $L_p K \times C$ as we have established
in Proposition~\ref{cellE}. This group has uncountably many order $p$ elements in the center whereas the center of $C$ is the
free abelian group~$K$.
\end{proof}

We have been trying out quite a few localization and cellularization functors before coming up with the above computation.
Let us mention a few promising ones, which we have not been able to fully understand. Libman showed that the inclusion
$A_n \hookrightarrow A_{n+1}$ is a localization (with respect to the inclusion itself), \cite[Example~3.4]{MR1760593}, and the
universal central extension $2.A_{n+1} \twoheadrightarrow A_{n+1}$ is a cellular cover by \cite[Section~11]{MR2995665}. Libman
also found examples of infinite localizations of finite groups, \cite{MR1758733}, and Prze\'zdziecki showed they can be arbitrarily
large, \cite{MR2464105}. Iterating this procedure is not manageable, but if we wish to understand the dynamics of the effect
of idempotent functors on the category of groups it would be important to make progress in this direction.

To conclude we would like to address a natural question.

\begin{question}
{\rm  Is there a cellularization functor $cell$, a localization functor $L$, and an abelian group $A$ such that $L cell L cell A \not\cong L cell A$ or
$ cell L cell L A \not\cong cell L A$?
}
\end{question}

We believe that the answer to Farjoun's question is negative as well in the category of abelian groups.
It is tempting to use the inclusion $\bigoplus_n \mathbb Z/p^n \hookrightarrow \prod_n \mathbb Z/p^n$, which is a localization by
\cite[Example~6.5]{MR2127962}, but we have not found an adequate cellularization functor to continue.


\bibliographystyle{amsplain}

\begin{thebibliography}{10}

\bibitem{MR2454697}
\emph{Guido's book of conjectures}, Enseign. Math. (2) \textbf{54} (2008),
  no.~1-2, 3--189, A gift to Guido Mislin on the occasion of his retirement
  from ETHZ, June 2006, Collected by Indira Chatterji. \MR{2454697}

\bibitem{MR537580}
S.~I. Adian, \emph{The {B}urnside problem and identities in groups}, Ergebnisse
  der Mathematik und ihrer Grenzgebiete [Results in Mathematics and Related
  Areas], vol.~95, Springer-Verlag, Berlin-New York, 1979, Translated from the
  Russian by John Lennox and James Wiegold. \MR{537580}

\bibitem{Central}
S.~I. Adian and V.~Atabekyan, \emph{Central extensions of free periodic groups
  of odd period $n \geq 665$}, English translation to appear in Math. Notes.

\bibitem{MR2995665}
M.~Blomgren, W.~Chach\'olski, E.~D. Farjoun, and Y.~Segev, \emph{Idempotent
  transformations of finite groups}, Adv. Math. \textbf{233} (2013), 56--86.
  \MR{2995665}

\bibitem{MR57:17648}
A.~K. Bousfield, \emph{Constructions of factorization systems in categories},
  J. Pure Appl. Algebra \textbf{9} (1976/77), no.~2, 207--220.

\bibitem{MR1320986}
C.~Casacuberta, \emph{Anderson localization from a modern point of view}, The
  \v Cech centennial ({B}oston, {MA}, 1993), Contemp. Math., vol. 181, Amer.
  Math. Soc., Providence, RI, 1995, pp.~35--44. \MR{1320986}

\bibitem{MR2125447}
C.~Casacuberta and A.~Descheemaeker, \emph{Relative group completions}, J.
  Algebra \textbf{285} (2005), no.~2, 451--469. \MR{2125447}

\bibitem{MR97i:55023}
W.~Chach{\'o}lski, \emph{On the functors {$CW\sb A$} and {$P\sb A$}}, Duke
  Math. J. \textbf{84} (1996), no.~3, 599--631.

\bibitem{Farjoun95}
E.~Dror Farjoun, \emph{Cellular spaces, null spaces and homotopy localization},
  Lecture Notes in Mathematics, vol. 1622, Springer-Verlag, Berlin, 1996.

\bibitem{MR2269828}
E.~Dror Farjoun, R.~G\"obel, and Y.~Segev, \emph{Cellular covers of groups}, J.
  Pure Appl. Algebra \textbf{208} (2007), no.~1, 61--76. \MR{2269828}

\bibitem{MR2272149}
R.~Flores, \emph{Nullification and cellularization of classifying spaces of
  finite groups}, Trans. Amer. Math. Soc. \textbf{359} (2007), no.~4,
  1791--1816. \MR{2272149}

\bibitem{MR2891699}
\bysame, \emph{On the idempotency of some composite functors}, Israel J. Math.
  \textbf{187} (2012), 81--91. \MR{2891699}

\bibitem{FloresMuro}
R.~Flores and F.~Muro, \emph{Torsion homology and cellular approximation},
  preprint available on the MathArXiV: 1707.07654.

\bibitem{MR0006510}
H.~Hopf, \emph{Fundamentalgruppe und zweite {B}ettische {G}ruppe}, Comment.
  Math. Helv. \textbf{14} (1942), 257--309. \MR{0006510}

\bibitem{MR1760593}
A.~Libman, \emph{Cardinality and nilpotency of localizations of groups and
  {$G$}-modules}, Israel J. Math. \textbf{117} (2000), 221--237. \MR{1760593}

\bibitem{MR1758733}
\bysame, \emph{A note on the localization of finite groups}, J. Pure Appl.
  Algebra \textbf{148} (2000), no.~3, 271--274. \MR{1758733}

\bibitem{MR1191619}
A.~Yu. Ol'shanski\u\i, \emph{Geometry of defining relations in groups},
  Mathematics and its Applications (Soviet Series), vol.~70, Kluwer Academic
  Publishers Group, Dordrecht, 1991, Translated from the 1989 Russian original
  by Yu. A. Bakhturin. \MR{1191619}

\bibitem{MR2464105}
A.~J. Prze\'zdziecki, \emph{Large localizations of finite groups}, J. Algebra
  \textbf{320} (2008), no.~12, 4270--4280. \MR{2464105}

\bibitem{MR1804680}
J.~L. Rodr\'{i}guez and D.~Scevenels, \emph{Iterating series of localization
  functors}, Une dégustation topologique [Topological morsels]: homotopy
  theory in the Swiss Alps (Arolla, 1999), Contemp. Math., vol. 265, Amer.
  Math. Soc., Providence, RI, 2000, pp.~211--221. \MR{MR1804680}

\bibitem{MR2003a:55024}
J.~L. Rodr{\'{\i}}guez and J.~Scherer, \emph{Cellular approximations using
  {M}oore spaces}, Cohomological methods in homotopy theory (Bellaterra, 1998),
  Progr. Math., vol. 196, Birkh\"auser, Basel, 2001, pp.~357--374.

\bibitem{MR2127962}
J.~L. Rodr\'{\i}guez, J.~Scherer, and L.~Str\"ungmann, \emph{On localization of
  torsion abelian groups}, Fund. Math. \textbf{183} (2004), no.~2, 123--138.
  \MR{2127962}

\bibitem{MR1942308}
J.~L. Rodr\'{\i}guez, J.~Scherer, and J.~Th\'evenaz, \emph{Finite simple groups
  and localization}, Israel J. Math. \textbf{131} (2002), 185--202.
  \MR{1942308}

\end{thebibliography}
\providecommand{\bysame}{\leavevmode\hbox to3em{\hrulefill}\thinspace}
\providecommand{\MR}{\relax\ifhmode\unskip\space\fi MR }
\providecommand{\MRhref}[2]{%
  \href{http://www.ams.org/mathscinet-getitem?mr=#1}{#2}
}
\providecommand{\href}[2]{#2}


\medskip
\begin{minipage}[t]{8 cm}
Ram\' on Flores\\
Departamento de Geometr\'{\i}a y Topolog\'{\i}a\\
Universidad de Sevilla\\
41080 - Sevilla, Spain\\
\textit{E-mail:}\texttt{\, ramonjflores@us.es}\\
\end{minipage}
\begin{minipage}[t]{8 cm}
J\'er\^ome Scherer\\
Institute of Mathematics\\
EPFL, Station 8\\
CH - 1015 Lausanne, Switzerland\\
\textit{E-mail:}\texttt{\,jerome.scherer@epfl.ch}\\
\end{minipage}

\end{document}